\pgfplotsset{width=10cm,compat=1.9}
\title{Constructions of Sequences of Alternating Sum and Difference Dominated Sets}
\author{Yorick Herrmann}
\email{\textcolor{blue}{\href{mailto:yherrman@uci.edu}{yherrman@uci.edu}}}
\author{Connor Hill}
\email{\textcolor{blue}{\href{mailto:hill.connor.03@gmail.com}{hill.connor.03@gmail.com}}}
\author{Merlin Phillips}
\email{\textcolor{blue}{\href{mailto:merlin.phillips216@gmail.com}{merlin.phillips216@gmail.com}}}
\author{Daniel Flores}
\email{\textcolor{blue}{\href{mailto:flore205@purdue.edu}{flore205@purdue.edu}}}
\author[Miller]{Steven J. Miller}
\email{\textcolor{blue}{\href{mailto:sjm1@williams.edu}{sjm1@williams.edu}}
\textcolor{blue}{\href{Steven.Miller.MC.96@aya.yale.edu}{Steven.Miller.MC.96@aya.yale.edu}}}
\address{Department of Mathematics, Williams College,
Williamstown, MA 01267, USA}
\author{Steven Senger}
\email{\textcolor{blue}{\href{mailto:StevenSenger@missouristate.edu}{StevenSenger@missouristate.edu}}}
\address{Department of Mathematics, Missouri State University,
Springfield, MO 65897, USA}
\date{\today}
\newcounter{master}[section]
\newtheorem{thm}[master]{Theorem}
\newtheorem{lem}[master]{Lemma}
\newtheorem{defn}[master]{Definition}
\newtheorem*{rem}{Remark}
\let\c@table\c@master
\newcommand{\sss}{\big|A+A\big|}
\newcommand{\dss}{\big|A-A\big|}
\begin{document}

\maketitle

\begin{abstract}
A More Sums Than Difference (MSTD) set is a finite set of integers $A$
where the cardinality of its sumset, $A+A$, is greater than the
cardinality of its difference set, $A-A$. Since addition is commutative
while subtraction isn't, it was conjectured that MSTD sets are rare. As
Martin and O'Bryant proved a small (but positive) percentage are MSTD, it
is natural to ask what additional properties can we impose on a chain of
MSTD sets; in particular, can we construct a sequence of sets alternating
between being MSTD and More Difference Than Sums (MDTS) where each properly contains the previous? We
provide several such constructions; the first are trivial and proceed by
filling in all missing elements from the minimum to maximum elements of
$A$, while the last is a more involved construction that prohibits adding
any such elements.
\end{abstract}

\section{Introduction}
For a finite set of integers $A$, we define the sumset $A+A \coloneqq \{a+b : a,b \in A\}$. Similarly, we define the difference set $A-A \coloneqq \{a-b : a,b \in A\}$. Let $\big|A\big|$ refer to the cardinality of $A$. Intuitively, because $a+b = b+a$ for all $a,b \in \mathbb{Z}$, but $a-b \ne b-a$ unless $a=b$, we would expect $\big|A-A\big|$ to be larger than $\big|A+A\big|$ \cite{MartinOBryant, nathanson2006}. However, this is not always the case. We say $A$ is a \textit{More Sums Than Differences} (MSTD) set if $\sss \ > \ \dss$. Alternatively, $A$ is a \textit{More Differences Than Sums} (MDTS) set if $\dss \ > \ \sss$. These sets are also referred to as sum-dominated or difference-dominated sets. 

The study of MSTD sets originated with early examples discovered by Conway $\big(\{0,2,3,4,7,11,12,14\}\big)$, Marica $\big(\{1, 2, 3, 5, 8, 9, 13, 15, 16\}\big)$ \cite{MaricaConwayConjecture}, and Freiman-Pigarev
\footnote{$\{0, 1, 2, 4, 5,9, 12, 13, 14, 16, 17, 21, 24, 25, 26, 28, 29\}$}\cite{FreimanPigarev1973}.
Nathanson later formalized the problem, proved several properties of MSTD sets, provided several methods for constructing MSTD sets, and remarked that MSTD sets should be rare \cite{nathanson2006}. Surprisingly, Martin and O'Bryant \cite{MartinOBryant} proved that as $n \to \infty$, a positive percentage of subsets of $\{1,2,\dots,n\}$ are MSTD, and Hegarty proved that the set Conway found is the smallest possible MSTD set in terms of cardinality and diameter \cite{HegartyMinSum}. Additional research has focused on explicit constructions of generalized MSTD sets \cite{miller2008,ExplicitLargeFamilies,nathanson2017} and generalizations to groups \cite{Zhao2010}. Many problems in number theory involve sumsets and difference sets.

At the recent 2025 CANT (Combinatorial and Additive Number Theory Conference), Samuel Allen Alexander posed the problem of finding an infinite sequence of sets with $A_{i-1} \subset A_i$ that alternate being MSTD and MDTS. We address this problem by presenting several methods for constructing such sequences. Before introducing these methods, we must first establish several known properties of MSTD sets, and sets in general.

First, for any set of integers $A$, we define the dilation $x \cdot A \coloneqq \{xa:a \in A\}$. For all integers $x,y$ such that $x \ne  0$, the set
\begin{align}B \ = \ x \cdot A+\{y\} \ = \ \{xa+y:a\in A\}\end{align} 
satisfies $\big|B+B\big| = \sss$ and $\big|B-B\big| = \dss$ \cite{nathanson2006}. This property ensures that any method developed on a specific subset of the integers can be extended, via dilation and translation, to the full domain of integers. Furthermore, through dilation and translation, we have that any non-trivial $k$-term arithmetic progression can be obtained from the interval 
\[I \ = \ \{1,2,\ldots,k\} \ := \ [1,k].\] 
It is also evident that \begin{align}
\notag I+I \ &= \ [2,2k]\ \\ 
I-I \ &= \ [1-k,k-1],
\end{align} so such sets have the same number of sums and differences. For instance, if $A=\{1,2,3\}$, then \begin{align}
   \notag A+A \ &= \ \{2,3,4,5,6\} \\
   \notag A-A \ &= \ \{-2,-1,0,1,2\} \\
   \sss \ &= \ \dss \ = \ 5.
\end{align}

For a set of integers $A$, if there exists an $a^* \in \mathbb{Z}$ such that $a^*-A = A$, then we say that $A$ is symmetric with respect to $a^*$, and we have that $\sss = \dss$ \cite{nathanson2006}. If $A$ is an arithmetic progression, we have that $A$ is symmetric to $a^*=\min(A)+\max(A)$. For instance, the arithmetic progression $A=\{1,3,5\}$ is symmetric with respect to $6$. 

Many methods that are used to construct MSTD sets make use of the symmetry property \cite{nathanson2006}. For example, the set $A = \{0,2,3,7,11,12,14\}$ is symmetric with respect to $14$, so $\sss = \dss$. However, adding $4$ to this set forms Conway's set, which is MSTD. We also use this symmetry property in one method presented in the paper.

Section 2 of the paper presents two methods that construct the desired alternating sequence through the use of `filling in' missing elements. Theorem \ref{thm:2.4MDTSinterval} provides a method to construct an MDTS superset from any MSTD set, which forms the foundation for Filling in Method 1. Theorem \ref{THMwarmup1method2} then builds on known constructions of MSTD sets to generate sequences with controlled growth in Filling in Method 2. Section 3 presents another constraint, requiring that any element missing from an intermediate set remains missing in all subsequent sets.
Theorem \ref{NFIMethod2Theorem} provides a method based on choosing fringe elements to control the sum and difference sets.
Lastly, Section 4 summarizes the growth rate of each method.

The first constructions are trivial as we add missing elements from
the minimal to maximal element in our set and then append whatever left
and right fringe are needed. The final construction is more interesting
and involved, as we force the constructions to be non-trivial by
forbidding the inclusion of any of the missing elements. We further
develop such methods in a companion paper.

\section{Filling in Methods}
For a set $A \subset [a,b]$, `filling in' the set $A$ refers to the process of adding some (or potentially all) missing elements of $[a,b] \setminus A$ to $A$. For instance, we could fill in the set $A  =  \{1,3,5\}$ by adding either $2$, $4$, or both $2$ and $4$ to $A$. 

We provide two methods that construct the desired sequence through the use of filling in. In each method, we first fill in $A_{i-1}$ and subsequently adjoin elements to create either a MSTD or MDTS set $A_{i}$. 
\subsection{Filling in Method 1}

Before we begin, we recall the following results, taken from Nathanson \cite{nathanson2006}.

\begin{lem} \label{lemma1}
    Let $m \geq 4$ and $2\leq r  \leq m - 3$, with $m,r \in \mathbb{N}$. Let $B = [0,m-1] \setminus \{r\}$. Then $B+B  =  [0,2m-2]$, and $B-B  =  [-(m-1),m-1]$.

\end{lem}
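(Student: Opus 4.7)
The plan is to verify both set equalities by proving the nontrivial containments, since $B+B \subseteq [0, 2m-2]$ and $B-B \subseteq [-(m-1), m-1]$ follow immediately from $B \subseteq [0, m-1]$. For each $s$ or $d$ in the target interval, I would exhibit an explicit representation using elements of $B$, falling back to a secondary representation exactly when the obvious one involves the forbidden element $r$.

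For the sumset inclusion $[0,2m-2] \subseteq B+B$, I would fix $s \in [0, 2m-2]$ and split into two cases by size. When $s \in [0, m-1]$, the representation $s = 0 + s$ lies in $B+B$ unless $s = r$; in that exceptional case I would substitute $s = 1 + (r-1)$, which is legal because $r \geq 2$ forces $r-1 \in \{1, \ldots, r-1\} \subseteq B$ and $1 \in B$. When $s \in [m, 2m-2]$, the representation $s = (m-1) + (s - (m-1))$ works unless $s - (m-1) = r$, i.e.\ $s = r + m - 1$; in that case I would substitute $s = (m-2) + (r+1)$, which lies in $B+B$ because $r \leq m-3$ forces $r+1 \leq m-2$, so both summands are elements of $[0, m-1] \setminus \{r\}$.

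For the difference set inclusion $[-(m-1), m-1] \subseteq B-B$, I would use the symmetry $B - B = -(B-B)$ to reduce to showing $[0, m-1] \subseteq B - B$. For $d \in [0, m-1]$, the representation $d = d - 0$ works unless $d = r$; in that exceptional case I would substitute $d = (r+1) - 1$, again valid by the bound $r \leq m-3$.

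The only real obstacle is bookkeeping: verifying in each of the exceptional substitutions that the replacement summand or minuend genuinely lies in $B$. The hypotheses $r \geq 2$ and $r \leq m-3$ are precisely calibrated for this—they guarantee that both $r-1$ and $r+1$ are legitimate elements of $B$ distinct from $r$, and that $m-2 \neq r$. These bounds are tight: if $r$ were allowed to be $1$ or $m-2$, the boundary sum $s = r+m-1$ or difference $d = r$ would lose its backup representation, and the conclusion would fail.
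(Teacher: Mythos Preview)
Your argument is correct: the case split by size of $s$, with explicit fallback representations $r = 1 + (r-1)$, $r+m-1 = (m-2)+(r+1)$, and $r = (r+1)-1$ whenever the naive choice hits the forbidden element, is exactly what is needed, and you have verified that the bounds $2 \le r \le m-3$ make each fallback land in $B$. One small point you leave implicit is that the anchor elements $0$, $1$, $m-2$, $m-1$ themselves lie in $B$; this also follows from $2 \le r \le m-3$, and you use it throughout.

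As for comparison: the paper does not prove this lemma at all---it is quoted from Nathanson \cite{nathanson2006} without argument---so there is no in-paper proof to match against. Your direct verification is the standard elementary route and would serve perfectly well as the omitted proof.
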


\begin{defn}
For integers $d \geq 1$, a \emph{$d$-dimensional arithmetic progression} is a set of the form
\[
L \ := \ \{a + x_1m_1 + \dots + x_dm_d : \ell_i \leq x_i \leq \ell_i + k_i - 1 \text{ for } i = 1, \dots, d\},
\]
where $a$ is the base point,
 $m_1,\dots,m_d$ are the step sizes,
$\ell_1,\dots,\ell_d$ are the starting indices, and
$k_1,\dots,k_d$ are the lengths in each dimension.

A 0-dimensional arithmetic progression is just a single point $\{a\}$.
\end{defn}

\begin{thm}  \label{Thm1}

Let $m \ge 4$, $m \in \mathbb{N}$ and let $B$ be a subset of $[0, m-1]$ such that 
$$B+B \ = \ [0,2m-2],$$ and $$B-B \ = \ [-m+1, m+1].$$

Let $L^*$ be a $(d-1)$-dimensional arithmetic progression contained in $[0, m-1]\setminus B$ such that $\min(L^*)-1\in B$ and $m \notin L^* +L^*$. Let $k \ge 2$ and let $L$ be the d-dimensional arithmetic progression $$L \ = \ (m-L^*)+m*[1,k].$$
Let $$a^* \ = \ \min(L)+\max(L) \ = \ (k+3)m-\min(L^*)-\max(L^*),$$ and $$A^* \ = \ B\cup L\cup (\{a^*\}-B).$$ Then $A = A^*\cup \{m\}$ is a MSTD set of integers.

\end{thm}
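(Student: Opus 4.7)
The plan is to perform a symmetry reduction followed by a direct count of the new contributions produced by adjoining $\{m\}$ to $A^*$.

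First I would establish that $A^*$ is symmetric about $a^*$, i.e.\ $a^* - A^* = A^*$. From the decomposition $A^* = B \cup L \cup (a^* - B)$, the sets $a^* - B$ and $a^* - (a^* - B) = B$ already lie in $A^*$. For the middle piece it suffices to show $a^* - L = L$: a routine coordinatewise computation in the defining parameters of a $d$-dimensional arithmetic progression shows that $L$ is invariant under the reflection $x \mapsto (\min L + \max L) - x$, and $a^* = \min(L) + \max(L)$ by construction. Hence $|A^* + A^*| = |A^* - A^*|$, so to prove $A$ is MSTD it is enough to show
\[
\big|(A+A) \setminus (A^* + A^*)\big| \ > \ \big|(A-A) \setminus (A^* - A^*)\big|.
\]

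For the counting, I would write $A + A = (A^* + A^*) \cup (m + A^*) \cup \{2m\}$ and $A - A = (A^* - A^*) \cup (m - A^*) \cup (A^* - m)$ (noting $0 \in A^* - A^*$), and analyze each augmenting piece using the range data $B \subset [0, m-1]$ and $L \subset [m+1, (k+1)m]$. The hypothesis $m \notin L^* + L^*$ is used to show that certain candidate new sums (beginning with $2m$ itself) are not absorbed by $L + L$ or by the cross terms, because any such collision would force $m$ to be realized as a sum of two elements of $L^*$. The boundary hypothesis $\min(L^*) - 1 \in B$ is used to tie the transition between $B$ and $L$ together cleanly, ensuring that additional elements of $m + L$ are genuinely new sums rather than reshuffles of sums already in $A^* + A^*$.

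On the difference side, the symmetry of $A^*$ makes both $m - A^*$ and $A^* - m$ translates of $A^*$, and since $A^* - A^*$ is itself symmetric about $0$ and already contains $B - B$, $L - L$, $(a^* - B) - (a^* - B)$, and the relevant cross differences, most new candidate differences are already present. A careful tally then yields strictly fewer new differences than new sums, giving $|A + A| > |A - A|$. The main obstacle will be the intricate bookkeeping in these final two steps: tracking exactly which elements of $m + A^*$ escape absorption into $A^* + A^*$, and which elements of $m \pm A^*$ are already absorbed by $A^* - A^*$. The two geometric hypotheses on $L^*$ are tuned precisely to make this tally net strictly positive, so the content of the proof lies in verifying that they do exactly this job and nothing more.
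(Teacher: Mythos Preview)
The paper does not actually prove this theorem: it is stated (together with Lemma~\ref{lemma1}) as a result ``taken from Nathanson \cite{nathanson2006},'' with no proof given. So there is no in-paper argument to compare against; the relevant benchmark is Nathanson's original proof.

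Your overall strategy---show $A^*$ is symmetric about $a^*$, conclude $|A^*+A^*|=|A^*-A^*|$, then compare the number of new sums versus new differences created by adjoining $m$---is exactly Nathanson's, and your claim that $a^*-L=L$ for a multidimensional arithmetic progression with $a^*=\min L+\max L$ is correct. However, what you have written is an outline, not a proof: the entire substance lies in the ``careful tally'' you defer, and you yourself flag the bookkeeping as ``the main obstacle.'' In Nathanson's argument that bookkeeping is precise and non-obvious: one must identify exactly which elements of $m+A^*$ and $\{2m\}$ lie outside $A^*+A^*$ (this is where $m\notin L^*+L^*$ and $\min(L^*)-1\in B$ are each used, and you have correctly guessed their roles), and exactly which elements of $\pm(m-A^*)$ lie outside $A^*-A^*$, and then verify the net is strictly positive. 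Until those counts are actually carried out, the proposal does not establish the theorem.
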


In addition to these results, we provide a simple construction which will be used to build the MDTS sets of our sequence.

\begin{thm}\label{thm:2.4MDTSinterval}
Let $m,p \in \mathbb{N}$ such that $p > m+1$. Define 
$$
A \ \coloneqq \ [0,m] \cup \{p\}.
$$
Then $A$ is a MDTS set of integers that satisfies
$$
|A-A| - |A+A| \ =
\ \begin{cases}
m, & \text{if } p > 2m, \\
p - m - 1, & \text{if } m+1 < p \leq 2m.
\end{cases}
$$
\end{thm}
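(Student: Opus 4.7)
The plan is to compute $|A+A|$ and $|A-A|$ directly by decomposing $A = [0,m] \cup \{p\}$ and carefully tracking when the resulting sub-intervals overlap. Writing
\[
A+A \ = \ ([0,m]+[0,m]) \ \cup \ ([0,m]+\{p\}) \ \cup \ \{2p\} \ = \ [0,2m] \ \cup \ [p,p+m] \ \cup \ \{2p\},
\]
the hypothesis $p > m+1$ gives $2p > p+m$, so $\{2p\}$ is always disjoint from the other two pieces. Whether $[0,2m]$ and $[p,p+m]$ are disjoint is governed precisely by the dichotomy $p > 2m$ versus $p \leq 2m$, which is exactly the case split in the theorem.

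For the difference set, I would exploit the fact that $A-A$ is symmetric about $0$ and compute only its non-negative part, which equals
\[
([0,m]-[0,m])_{\geq 0} \ \cup \ (\{p\}-[0,m]) \ = \ [0,m] \ \cup \ [p-m,p];
\]
the contribution $\{p\}-\{p\}=\{0\}$ is already contained in $[0,m]$. Again, these two intervals are disjoint exactly when $p > 2m$ and merge into $[0,p]$ when $p \leq 2m$. Doubling the non-negative part and subtracting one (for $0$, which is counted once) gives $|A-A|$.

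In the case $p > 2m$ I would read off
\[
|A+A| \ = \ (2m+1)+(m+1)+1 \ = \ 3m+3, \qquad |A-A| \ = \ 2\big((m+1)+(m+1)\big)-1 \ = \ 4m+3,
\]
so the difference is $m$. In the case $m+1 < p \leq 2m$ the intervals merge, giving
\[
|A+A| \ = \ (p+m+1)+1 \ = \ p+m+2, \qquad |A-A| \ = \ 2(p+1)-1 \ = \ 2p+1,
\]
so the difference is $p-m-1$. In both cases the quantity is strictly positive (using $m \geq 1$ and $p > m+1$ respectively), which establishes that $A$ is MDTS.

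The argument is essentially bookkeeping; the only subtlety is verifying that the three candidate collisions involving $\{2p\}$ never occur and that the cross-difference interval $[p-m,p]$ is correctly placed relative to $[0,m]$. The hypothesis $p > m+1$ (rather than just $p > m$) is exactly what rules out the degenerate boundary situations where $2p$ could land on $p+m$ or the intervals could produce a sum–difference balance.
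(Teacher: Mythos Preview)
Your proof is correct and follows essentially the same approach as the paper: decompose $A+A$ and $A-A$ into intervals, then split into the two cases $p>2m$ and $m+1<p\le 2m$ according to whether the intervals overlap. The only cosmetic difference is that you exploit the symmetry of $A-A$ to count its non-negative half and double, whereas the paper lists all three intervals $[-m,m]$, $[-p,m-p]$, $[p-m,p]$ directly; the resulting counts $|A+A|$ and $|A-A|$ are identical in both cases.
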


\begin{proof}

For $m,p \in \mathbb{N}$ such that $p > m+1$, define
\begin{equation} \label{eq:A2def}
A \ := \ [0, m] \cup \{p\}.
\end{equation}

We first compute the values of the sumset and difference set of $A$: \begin{align} 
\notag A + A \ &= \ \left[0, 2m\right] \cup \left[p, m+p\right] \cup \left\{2p\right\}  \\
A - A \ &= \ \left[-m, m\right] \cup \left[-p, m-p\right] \cup \left[p-m, p\right]. \label{eq:A2sumdiff}
\end{align}

To count the cardinality of each set, we consider two cases.
\begin{enumerate}
\item[] \textbf{Case 1:} $p > 2m$. In this case, since none of the subintervals overlap, counting the cardinality of each set simplifies to adding together the total number of elements in each interval:
\begin{align}
\notag \big| A + A \big| \ &= \ (2m+1) + (m+1) +1 \ = \ 3m +3 
\notag \\
\big| A - A \big| \ &= \ (2m+1) + (m+1) + (m+1) \ = \ 4m+3. \label{eq:case1sum/diff}
\end{align}

Thus, $A$ is difference dominated with exactly $m$ more differences than sums.

\item[] \textbf{Case 2:} $m+1 < p \leq 2m$. Since $[0,2m] \cup [p, m+p]  =  [0,m+p]$ and $m+p < 2p$, we have
\begin{equation} \label{eq:case2sum}
\big|A + A\big| \ = \ (m+p+1)+1 \ = \ m+p+2. 
\end{equation}
Similarly, $\left[-p, m-p\right] \cup [-m,m] \cup [p-m,p] = [-p,p]$, so 
\begin{equation} \label{eq:case2diff}
\big|A - A\big| \ = \ 2p+1.
\end{equation}
Since $p > m+1$, we have $2p+1 > m+p+2$, so $A$ is difference-dominated with $p-m-1$ more differences than sums.  
\end{enumerate}
Thus, $A = [0, m] \cup \{p\}$ is difference-dominated for all $p > m+1$.
\end{proof}

With Theorem \ref{thm:2.4MDTSinterval}, we can construct an MDTS superset $A_{2}$ from any MSTD set $A_{1}$ by letting $m = \max(A_{1})$ and selecting $p > m+1$. 

Next, we construct a MSTD superset $A_{3}$ of the MDTS set $A_{2}$. Let $n = p+5$ if $p$ is even, and let $n = p+2$ if $p$ is odd. In either case, $n$ is odd. Define
\begin{equation} \label{eq:B3def}
B_3 \ := \ [0, n - 1] \setminus \{r\},
\end{equation}
where $2 \le r \le n - 3$ and $A_2 \subset B_3$. Such an $r$ exists since taking $r = n - 3$ gives $r \notin [0, m] \cup \{p\} = A_2$ when $p > m+1$.

By Lemma \ref{lemma1}, $B_3$ satisfies 
\begin{align}
\notag B_3 + B_3 \ &= \ [0, 2n - 2]\\
B_3 - B_3 \ &= \ [-(n-1),n-1].
\end{align}

Let $L^* = \{r\}$ be a 0-dimensional arithmetic progression. Clearly, $\min(L^*) = r$ and $\min(L^*) - 1 \in B_3$. If $n \in L^* + L^*$, then $n = 2r$. As $n$ is odd, $n \ne 2r$, and $n \notin L^* + L^*$.

Now, let $k \ge 2$ and define
\begin{align}
\notag L \ &\coloneqq \ \{n - r\} + n * [1, k] \ = \ \{2n - r, 3n - r, \ldots, (k+1)n - r\} \\
\notag a^* \ &\coloneqq \ 2n - r + (k + 1)n - r \ = \ (k + 3)n - 2r \\
A_3^* \ &\coloneqq \ B_3 \cup L \cup (\{a^*\} -B_3).
\end{align}
Then $A_3 = A_3^* \cup \{n\}$ is MSTD by Theorem \ref{Thm1}. Furthermore, since $A_2 \subset B_3  \subset A_3$, we have $A_2 \subset A_3$.

Now, let $m_2 = a^*$. Clearly, $A_3 \subset [0, m_2]$. We can now repeat the process indefinitely, substituting $m_2$ in for $m$ to generate $A_4$, and so on. This gives a sequence $A_1 \subset A_2 \subset\cdots$ such that $A_{2i-1}$ is sum-dominated and $A_{2i}$ is difference-dominated for all $i \in \mathbb{N}$.

We now apply Method 1 to the Conway set in order to examine the smallest commonalities of this method. Let $A_1 =  \{0,2,3,4,7,11,12,14\}$; $A_1$ has 8 elements and a diameter of 14. 

The two smallest possible $p$ we can pick to generate $A_2$ are $p = 16$ and $p = 17$. However, when we generate $A_3$, we want $n$ to be as small as possible to minimize the cardinality and diameter of $A_3$. If $p = 16$, then $n = 21$, but if $p = 17$, then $n = 19$. Thus, we choose $p = 17$.

For $p = 17$, $A_2 = [0,14] \cup \{17\}$, which has 16 elements and a diameter of 17. 

To generate $A_3$, we set $n = p+2 = 19$. We choose $r$ to be as large as possible to minimize the diameter and cardinality of $L$ and $\{a^*\} - B_3$. For the same reasons, we choose $k$ to be as small as possible. We set $r = 16$ and $k = 2$. Then
\begin{align}
\notag B_3 \ &= \ [0,18]\setminus\{16\} \\
\notag L^* \ &= \ \{16\} \\
\notag L \ &= \ \{22,41\} \\ 
\notag a^* \ &= \ 63 \ \\
\{a^*\} - B_3 \ &= \ [45,63] \setminus \{47\}.
\end{align}

Thus, $A_3 = B_3 \ \cup L \ \cup \ (\{a^*\} -B_3) \ \cup \ \{19\}$ is MSTD. Table \ref{ta:method1} below displays the minimal characteristics for the first few sets in this chain generated by the Conway set. The density of a set refers to that set's cardinality divided by its diameter. The D($A_i$)/D($A_{i-1}$) column measures how much larger a set's diameter is compared to the previous set in the sequence. Note that all entries in the density and growth rate columns are rounded to 3 decimal places, and this convention will be followed throughout the rest of the paper. 


\begin{table}[h]  
\centering 
\caption{Filling in Method 1 Example Sequence} 
\label{ta:method1}
\begin{tabular}{|c|c|c|c|c|c|c|c|c|}
\hline
Set & $\big|A_i+A_i\big|$ & $\big|A_i-A_i\big|$ & Cardinality & Diameter & $\big|A_i\big|/\big|A_{i-1}\big|$ & D($A_i$)/D($A_{i-1}$)& Density \\
\hline
$A_1$ & 26 & 25  & 8 & 14 & N/A & N/A & 0.571\\
$A_2$ & 33 & 35  & 16 & 17 & 2.000 & 1.214 &0.941 \\
$A_3$ & 126 & 125  & 39 & 63 & 2.438 & 3.706 & 0.619\\
$A_4$ & 130 & 131  & 65 & 65 & 1.667 & 1.032 & 1.000\\
$A_5$ & 414 & 413 & 135 & 207 & 2.077 & 3.185 & 0.652\\
$A_6$ & 418 & 419  & 209 & 209 & 1.548 & 1.010 & 1.000\\
$A_7$ & 1278 & 1277 & 423 & 639 & 2.024 & 3.057 & 0.662\\
\vdots & \vdots & \vdots & \vdots & \vdots & \vdots & \vdots & \vdots \\
\hline
\end{tabular}\\
\raggedleft Limiting MSTD density: $0.667$\hspace{0.8cm}
\end{table}

Table \ref{ta:method1} shows that both the cardinality and diameter at least triples between consecutive MSTD sets in this sequence. The exponential growth between the sets generated by this method raises the question of whether a more efficient approach exists for producing the desired sequence.

\begin{rem}
The process for generating such a sequence could be generalized to any MSTD, not just those that start out as subsets of nonnegative integers, due to dilations and translations. 
\end{rem} 

\subsection{Filling in Method 2}

We now construct a second method that incorporates filling in, but is more efficient than Filling in Method 1. This method builds on Theorem 1.1 of Miller, Scheinerman, and Orosz \cite{miller2008}, which requires the following definition.

\begin{defn} \label{def:Pn}
    A set $A \subset [a,b]$ is said to be $P_n$ if both $A+A$ and $A-A$ contain all possible sums or differences with possible exceptions for the first and last $n$ elements, i.e., $[2a+n,2b-n]\subset A+A$ and $[-(b-a)+n,(b-a)-n]\subset A-A$.
\end{defn}

\noindent With this definition in place, we are able to state the following theorem from \cite{miller2008}.

\begin{thm} \label{miller2008thm1.1}
Let $A = L\cup R$ be a $P_n$, MSTD set where $L\subset[1,n],R\subset[n+1,2n]$, and $1,2n\in A$. Fix a $k \geq n$ and let $m$ be arbitrary. Let $M$ be any subset of $[n+k+1, \ n+k+m]$ with the property that it does not have a run of more than $k$ missing elements. Assume further that $n+k+1\notin M$ and set $A(M;k) = L\cup O_1\cup M\cup O_2\cup R'$, where $O_1 = [n+1,n+k]$, $O_2 = [n+k+m+1,n+2k+m]$, and $R' = R+2k+m$. Then $A(M;k)$ is a MSTD set.
\end{thm}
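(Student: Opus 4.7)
The plan is to show that $A' := A(M;k)$ is itself $P_n$, from which the MSTD property will follow by a clean cancellation. Namely, once $A'$ is $P_n$, its sumset and difference set each split into a fully filled middle interval (of the same length, equal to $2 \cdot \text{diam}(A') - 2n + 1$) plus low and high fringes; the fringes are determined purely by $L$ and $R' = R + (2k+m)$, so they match the corresponding fringes of $A$ up to translation. The identity $|A'+A'| - |A'-A'| = |A+A| - |A-A|$ then follows, and the right-hand side is positive by hypothesis.

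First I would record the five-zone decomposition $A' = L \cup O_1 \cup M \cup O_2 \cup R'$ with $L \subset [1,n]$, $O_1 = [n+1, n+k]$ a full interval, $M \subset [n+k+1, n+k+m]$ with no run of more than $k$ missing elements, $O_2 = [n+k+m+1, n+2k+m]$ a full interval, and $R' \subset [n+2k+m+1, 2n+2k+m]$. Since $\min A' = 1$ and $\max A' = 2n+2k+m$, verifying that $A'$ is $P_n$ amounts to showing $[n+2,\, 3n+4k+2m] \subset A'+A'$ and $[-(n+2k+m-1),\, n+2k+m-1] \subset A'-A'$.

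The main work is verifying these two inclusions by a coverage-by-zones analysis. For the sumset, one tracks which zonal pair $Z_i + Z_j$ (with $Z_1 = L, \dots, Z_5 = R'$) contributes to each sub-interval of the target. Pairs of the form $O_i + O_j$ are automatically full intervals, and pairs involving $M$ (such as $M + O_1$ or $M + O_2$) fill their target sub-interval because shifting $M$ by any element of a full interval of length $k$ sweeps $M$ across any gap; the density condition says every length-$(k+1)$ window in $[n+k+1, n+k+m]$ meets $M$. The original $P_n$ property of $A$ handles coverage of $[n+2, 3n]$ via the inclusion $A+A \subset A'+A'$, while the endpoint conditions $1 \in L$, $2n \in R$, and $n+k+1 \notin M$ glue neighboring sub-intervals seamlessly. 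A parallel argument, swapping sums for differences and using the zonal pairings $Z_i - Z_j$, gives the difference-set inclusion.

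Once $A'$ is verified to be $P_n$, the low sum-fringe $(A'+A') \cap [2, n+1]$ is fed only by $L+L$ (no other zonal pair reaches down that low) and hence equals the low fringe of $A+A$; the high sum-fringe $(A'+A') \cap [3n+4k+2m+1, 4n+4k+2m]$ is fed only by $R'+R'$ and is a translation of the high fringe of $A+A$ by $4k+2m$. The difference fringes behave analogously with $R'$ replacing $R$. Since the fringes match in cardinality and the filled middles have equal length, the telescoping $|A'+A'| - |A'-A'| = |A+A| - |A-A| > 0$ follows. The hard part will be the zone-by-zone coverage step: the idea is routine but executing it cleanly requires a careful partition of the middle range and explicit checking that every sub-interval is hit, with the hypotheses $k \geq n$, the density of $M$, and $n+k+1 \notin M$ each getting used.
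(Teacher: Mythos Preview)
The paper does not actually prove this theorem: it is quoted verbatim from Miller--Orosz--Scheinerman \cite{miller2008} and used as a black box, so there is no in-paper proof to compare against. That said, your proposed argument is correct and is essentially the proof given in the cited source: show $A(M;k)$ is $P_n$ by a zone-by-zone coverage check (using $k\ge n$, the no-long-gap condition on $M$, and the endpoint hypotheses), then observe that the low and high fringes of the sumset and difference set are determined solely by $L$ and $R'=R+2k+m$, whence $|A'+A'|-|A'-A'|=|A+A|-|A-A|>0$. One small point worth tightening when you write it out: your sentence ``the original $P_n$ property of $A$ handles coverage of $[n+2,3n]$ via the inclusion $A+A\subset A'+A'$'' is fine, but you should make explicit that $A\subset A'$ holds because $R\subset[n+1,2n]\subset[n+1,n+k]=O_1$ (this is where $k\ge n$ enters).
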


To begin, let $A = A_1\subset[1,2n]$ be a MSTD set which includes both $1$ and $2n$, and which does not include $n$. Suppose further that $A$ is $P_n$. Write $A = L\cup R$ where $L\subset[1,n]$ and $R\subset[n+1,2n]$.

Let
\begin{equation}
    A_2 \ = \ ([0,2n]\setminus\{n\})\cup\{3n\}
\end{equation}
which is difference-dominated (similar calculation in Case 1 of Filling in Method 1), and let
\begin{equation}
    A_3 \ = \ \left(L-n-1)\cup([0,2n]\setminus\{n\}\right)\cup(R+n).
\end{equation}

Analyzing $A_3$ is identical to analyzing $A_3+n+1$, which can be expressed as 
\begin{align}
A_3+n+1 \ = \ L \cup ([n+1,3n+1] \setminus \{2n+1\}) \cup(R+2n+1).  
\end{align} 

Using Theorem \ref{miller2008thm1.1} with the parameters $k = n$, $m = 1$, and $M = \varnothing\subset[2n+1,2n+1]$, we see that $A_3+n+1$ is a MSTD set. Thus $A_3$ is MSTD as well, and we have
\begin{equation}
    A_1\subset A_2\subset A_3.
\end{equation}

For an integer $l \ge 2$, let
\begin{equation}
    A_{2l} \ \coloneqq \ ([(1-l)n,(l+1)n]\setminus\{n\})\cup\{(l+2)n\}
\end{equation}
which is difference-dominated, and let
\begin{equation}
    A_{2l+1} \ \coloneqq \ (L-ln-1)\cup([(1-l)n,(l+1)n]\setminus\{n\})\cup(R+ln).
\end{equation}

Using $k = ln$, $m = 1$, and $M = \varnothing$, we can apply Theorem \ref{miller2008thm1.1} to $$A_{2l+1}+ln+1 = L\cup([n+1,(2l+1)n+1]\setminus\{(l+1)n+1\})\cup(R+2ln+1)$$ to see that $A_{2l+1}$ is sum-dominated. Finally, we have
\begin{align} 
    \notag A_{2l+2} \ &= \ ([\min A_{2l+1},\max A_{2l+1}]\setminus\{n\})\cup\{\max A_{2l+1}+n\}\\
    \ &= \ ([-ln,(l+2)n]\setminus\{n\})\cup\{(l+3)n\} \ = \ A_{2(l+1)},
\end{align}
and
\begin{equation}
    A_{2l} \subset A_{2l+1} \subset A_{2l+2}.
\end{equation}

We summarize this result with the following theorem.
\begin{thm} \label{THMwarmup1method2}
    Let $A = L\cup R$ where $L\subset[1,n]$ and $R\subset[n+1,2n]$, with $1,2n\in A$ and $n\notin A$. Suppose that $A$ is $P_n$ and MSTD. Let $A_1 = A$ and for $l \ge 1$, let $$A_{2l} = ([(1-l)n,(l+1)n]\setminus\{n\})\cup\{(l+2)n\}$$ and let $$A_{2l+1} = (L-ln-1)\cup([(1-l)n,(l+1)n]\setminus\{n\})\cup(R+ln).$$ Then the sequence of sets $A_1\subset A_2\subset \cdots$ alternates between being MSTD and MDTS. Furthermore, for $m \ge 3$ the diameter of the set $A_m$ is n larger than that of the set $A_{m-1}$.
\end{thm}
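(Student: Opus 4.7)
The plan is to verify the four assertions of the theorem---MSTD at odd indices, MDTS at even indices, the nested containments, and the diameter recursion---by reducing each to an already-established result or to a direct interval-arithmetic count.

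For the odd-index sets, the plan is to translate and invoke Theorem~\ref{miller2008thm1.1} exactly as in the warm-up for $A_3$. Shifting $A_{2l+1}$ by $ln+1$ produces
\[
A_{2l+1}+ln+1 \ = \ L\cup\bigl([n+1,(2l+1)n+1]\setminus\{(l+1)n+1\}\bigr)\cup(R+2ln+1),
\]
which matches the output of Theorem~\ref{miller2008thm1.1} with the choice $k=ln$, $m=1$, and $M=\varnothing$. All hypotheses on $A$ transfer directly, and $k=ln\ge n$ whenever $l\ge 1$. Since translation preserves $|A+A|$ and $|A-A|$, $A_{2l+1}$ inherits being MSTD.

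For the even-index sets $A_{2l}=B\cup\{(l+2)n\}$ with $B=[(1-l)n,(l+1)n]\setminus\{n\}$, the plan is a direct cardinality count. Lemma~\ref{lemma1}, applied after shifting $B$ to $[0,2ln]\setminus\{ln\}$, gives $B+B=[(2-2l)n,(2l+2)n]$ and $B-B=[-2ln,2ln]$. I then expand $A_{2l}+A_{2l}=(B+B)\cup(B+(l+2)n)\cup\{2(l+2)n\}$ and $A_{2l}-A_{2l}=(B-B)\cup(B-(l+2)n)\cup((l+2)n-B)\cup\{0\}$. The key observation is that the hole in $B+(l+2)n$ at $(l+3)n$ lies inside $B+B$ precisely when $l\ge 1$, and analogously the holes $\pm(l+1)n$ created in the difference expansions lie inside $B-B$. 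Thus the sumset collapses to one interval plus the isolated point $2(l+2)n$, and the difference set collapses to a single symmetric interval. A short count then yields $|A_{2l}-A_{2l}|-|A_{2l}+A_{2l}|=n-1$, so $A_{2l}$ is MDTS (the MSTD hypothesis on $A$ forces $n\ge 2$, since no MSTD set fits inside $\{1,2\}$).

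The remaining assertions are bookkeeping. I will verify $A_1\subset A_2$ from $n\notin A$; $A_{2l}\subset A_{2l+1}$ from $(l+2)n=2n+ln\in R+ln$ (since $2n\in R$); and $A_{2l+1}\subset A_{2l+2}$ from $L-ln-1\subset[-ln,-1]$ together with $R+ln\subset[(l+1)n+1,(l+2)n]$, both of which lie inside $[-ln,(l+2)n]\setminus\{n\}$. For diameters, reading minima and maxima off the definitions gives $\mathrm{diam}(A_{2l})=(2l+1)n$ and $\mathrm{diam}(A_{2l+1})=(2l+2)n$, so adjacent diameters differ by exactly $n$ once $m\ge 3$ (the jump from $A_1$ to $A_2$ is $n+1$, which explains the restriction). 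The main obstacle is the even case: one must pin down the two hole-plugging coincidences carefully enough to extract the exact surplus $n-1$ rather than a merely positive bound, since everything else amounts to shifting into the hypotheses of a single already-proven theorem.
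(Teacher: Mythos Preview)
Your proposal is correct and follows essentially the same approach as the paper: the odd-index MSTD claim is handled by translating by $ln+1$ and invoking Theorem~\ref{miller2008thm1.1} with $k=ln$, $m=1$, $M=\varnothing$, exactly as the paper does, and the containments and diameters are checked by the same direct bookkeeping. Your treatment of the even-index MDTS claim is actually more thorough than the paper's---where the paper simply asserts ``difference-dominated (similar calculation in Case~1 of Filling in Method~1),'' you invoke Lemma~\ref{lemma1} to neutralize the hole at $n$ and then carry out the interval arithmetic to the exact surplus $|A_{2l}-A_{2l}|-|A_{2l}+A_{2l}|=n-1$, which the paper never computes.
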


An example of a set which can be used for this construction is \begin{align}
A_1 \ = \ \{1, 3, 4, 8, 9, 12, 13, 15, 18, 19, 20\},
\end{align}
which is a $P_{10}$ MSTD set with $1,20 \in A_1$ and $10 \notin A_1$. Breaking down $A_1$ into $L$ and $R$, we have \begin{align}
   \notag L \ &= \ \{1,3,4,8,9\} \\
    R \ &= \ \{12,13,15,18,19,20\}.
\end{align}
When applying Method 2 to $A_1$, we get \begin{align}    
\notag A_2 \ &= \ [0,20] \setminus \{10\} \cup \{30\} \\
A_3 \ &= \ \{-10,-8,-7,-3,-2\} \cup  [0,20] \setminus \{10\} \cup \{22,23,25,28,29,30\}.
\end{align}

The cardinalities and diameters of the sets in this sequence are given in Table \ref{FIM2 Table} below.


\begin{table}[h] 
\centering
\caption{Filling in Method 2 Example Sequence} 
\label{FIM2 Table}
\label{tab:method1} 
\begin{tabular}{|c|c|c|c|c|c|c|c|}
\hline
Set & $\big|A_i+A_i\big|$ & $\big|A_i-A_i\big|$ & Cardinality & Diameter & $\big|A_i\big|/\big|A_{i-1}\big|$ & D($A_i$)/D($A_{i-1}$)& Density \\
\hline
$A_1$ & 38 & 37 & 11 & 16 & N/A & N/A  & 0.688\\
$A_2$  & 52 & 61 & 21 & 30 & 1.909 & 1.875 & 0.700\\
$A_3$  & 80 & 79 & 31 & 40 & 1.476 & 1.333 &  0.775\\
$A_4$ & 92 & 101 & 41 & 50 & 1.323 & 1.25 & 0.820\\
$A_5$ & 120 & 119 & 51 & 60 & 1.244 & 1.2  & 0.850\\
$A_6$  & 132 & 141 & 61 & 70 & 1.196 & 1.167 & 0.871 \\
$A_7$  & 160 & 159 & 71 & 80 & 1.164 & 1.143 &0.888\\
\vdots & \vdots &\vdots &\vdots & \vdots & \vdots & \vdots & \vdots \\
\hline
\end{tabular}\\
\raggedleft Limiting MSTD density: $1.000$\hspace{0.8cm}
\end{table}

As seen in Table \ref{FIM2 Table}, both the cardinality and diameter increases by 10 between $A_i$ and $A_{i+1}$ in this sequence for $i \geq 2$. So, while Filling in Method 1 produces the desired sequence for any MSTD set $A_1$, Filling in Method 2 exhibits a more efficient and controlled growth rate between the $A_i$.

\section{Non-Filling in Method}
We now place an additional constraint on the alternating sequence we seek to generate. For any $A_k$, let $a = \min A_k$ and $b = \max A_k$. Then, if an $x \in [a,b]$ is not in $A_k$, we require that $x \notin A_n$ for all $n > k$. In other words, this restriction prohibits filling in when generating the sequence. While filling in provides valid constructions, it can substantially alter or even erase the structure of earlier MSTD sets in the sequence, leading to relatively trivial solutions. With non-filling in methods, each step preserves the properties of the previous sets, leading to more structurally meaningful sequences. We provide a single such method below and several more in a future paper.


\begin{equation}
    A^{\ge x} \ \coloneqq \ \{a\in A:a\ge x\}.
\end{equation}

Let
\begin{equation}
    A_0 \ = \ \{0,1,2,5,8,9,10\}.
\end{equation}

For $l\ge1$, let
\begin{equation}
    A_{2l-1} \ = \ A_0 \ \cup \ (8\cdot[1,l]+\{6,7,9,10\}).
\end{equation}

\noindent We prove that $A_{2l-1}$ is MSTD.\\

First, we show
\begin{equation}
    A_{2l-1}+A_{2l-1} \ = \ [0,16l+20]\setminus\{21\}. \label{Sec3M2Ref1}
\end{equation}
Note that
\begin{equation}
    [0,28]\setminus\{21\}\ \subset\ A_1+A_1\ \subseteq\ A_{2l-1}+A_{2l-1}.
\end{equation}
To construct sums for $[29,16l+20]$, we take $k\in[2,2l]$, and $m,n \in \mathbb{N}$ such that $1 \leq m,n \leq l$ and $m+n = k$. We then have the following cases:
\begin{itemize}
    \item $8k+13 \ = \ (8m+7)+(8n+6)$
    \item $8k+14 \ = \ (8m+7)+(8n+7)$
    \item $8k+15 \ = \ (8m+9)+(8n+6)$
    \item $8k+16 \ = \ (8m+9)+(8n+7)$
    \item $8k+17 \ = \ (8m+10)+(8n+7)$
    \item $8k+18 \ = \ (8m+9)+(8n+9)$
    \item $8k+19 \ = \ (8m+10)+(8n+9)$
    \item $8k+20 \ = \ (8m+10)+(8n+10)$.
\end{itemize}

Thus, we have proven \eqref{Sec3M2Ref1}. Next, we show
\begin{equation}
    A_{2l-1}-A_{2l-1} \ = \ [-8l-10,8l+10]\setminus\{-8l-3,8l+3\}. \label{S3M2R2}
\end{equation} Note that
\begin{equation}
    [-10,10]\ \subset\ A_1-A_1\ \subseteq\ A_{2l-1}-A_{2l-1}.
\end{equation}
For $k\in[1,l-1]$, we have 
\begin{equation*}
    8k+3 \ = \ (8(k+1)+9)-14,
\end{equation*}
and for $k\in[1,l]$ we have the following cases:
\begin{itemize}
    \item $8k+4 \ = \ (8k+6)-2$
    \item $8k+5 \ = \ (8k+6)-1$
    \item $8k+6 \ = \ (8k+6)-0$
    \item $8k+7 \ = \ (8k+7)-0$
    \item $8k+8 \ = \ (8k+9)-1$
    \item $8k+9 \ = \ (8k+9)-0$
    \item $8k+10 \ = \ (8k+10)-0$.
\end{itemize}

Lastly, if we assume without loss of generality that $m > n$, we note that there exists no combination of $m \in 8l+\{6,7,9,10\}$ and $n \in A_0$ such that $m-n = 8l+3$. This is enough to conclude $\pm(8l+3) \notin A_{2l-1}-A_{2l-1}$ since if $m \notin 8l+\{6,7,9,10\}$ or $n \notin A_0$, then $m-n < 8l+3$.

Thus, we have proven \eqref{S3M2R2}, and we have that $A_{2l-1}$ is MSTD with 
\begin{equation}
    16l+20 \ = \ \big|A_{2l-1}+A_{2l-1}\big| \ > \ \big|A_{2l-1}-A_{2l-1}\big| \ = \ 16l+19. \label{16l+20>16l+19}
\end{equation}
Furthermore, we observe that 
\begin{equation}
    A_1\subset A_3\subset\cdots \subset A_{2l-1}\subset \cdots.
\end{equation}

Now, let
\begin{equation}
    A_{2l} \ = \ A_{2l-1}\cup\{8l+14\}.
\end{equation}

\noindent We show that $A_{2l}$ is MDTS.\\

\indent Since $a+8l+14 \neq 21$ for any $a\in A_{2l-1}$, the new sums are $2\cdot(8l+14)=16l+28$ and the elements of the set
\begin{equation}
    \{a+8l+14:a\in A_{2l-1}^{\ge8l+7}\} \ = \ \{16l+21,16l+23,16l+24\}.
\end{equation}

Note that we exclude $8l+6$ from the construction of the new sums because $8l+6+8l+14 = 8l+10+8l+10 \in A_{2l-1}+A_{2l-1}$. Next, since $\pm(8l+3)\notin (A_{2l}-A_{2l})$, the new differences are the numbers
\begin{equation}
    \pm(\{8l+14\}-\{0,1,2\}) \ = \ \pm\{8l+12,8l+13,8l+14\}.
\end{equation}
Therefore, there are 4 new sums and 6 new differences. Combining this with \eqref{16l+20>16l+19}, we get \begin{align}
    16l+24 \ = \ \big|A_{2l}+A_{2l}\big| \ < \ \big|A_{2l}-A_{2l}\big| \ = \ 16l+25
\end{align}
and so $A_{2l}$ is MDTS. Additionally, we have
\begin{equation}
    A_{2l-1}\subset A_{2l}\subset A_{2l+1}
\end{equation}
and thus the sequence $A_1\subset A_2 \subset \cdots$ alternates between being MSTD and MDTS. Furthermore, filling in is not used in this method since every added element is greater than the maximum of the previous set. The cardinalities and diameters of the first few sets in this sequence are given in Table \ref{NFM2Table} below.

\begin{table}[h]
\centering
\caption{Non-Filling in Method 2 Example Sequence} 
\label{NFM2Table}
\begin{tabular}{|c|c|c|c|c|c|c|c|}
\hline
Set & $\big|A_i+A_i\big|$ & $\big|A_i-A_i\big|$ & Cardinality & Diameter & $\big|A_i\big|/\big|A_{i-1}\big|$ & D($A_i$)/D($A_{i-1}$)& Density \\
\hline
$A_1$ & 36& 35 & 11 & 18& N/A & N/A & 0.611 \\
$A_2$ & 40&41 & 12 & 22& 1.091 & 1.222 & 0.545 \\
$A_3$ & 52&51 & 15 & 26& 1.25 & 1.182 & 0.577 \\
$A_4$ & 56&57 & 16 & 30& 1.067 & 1.154 & 0.533 \\
$A_5$ & 68&67 & 19 & 34& 1.188 & 1.133 & 0.559 \\
$A_6$ & 72&73 & 20 & 38 & 1.053 & 1.118 & 0.526\\
$A_7$ & 84&83 & 23 & 42 & 1.15 & 1.105 &0.548\\
\vdots & \vdots &\vdots &\vdots & \vdots &  \vdots & \vdots & \vdots \\
\hline
\end{tabular}\\
\raggedleft Limiting MSTD density: $0.500$\hspace{0.8cm}
\end{table}

Each set in this sequence has a diameter exactly 4 larger than the previous set in the sequence. Furthermore, the diameter increases by 8 and the cardinality by 4 between consecutive MSTD sets in the sequence.

This sequence illustrates the idea behind the following theorem.
\begin{thm} \label{NFIMethod2Theorem}
    Suppose there are sets $L,R\subset[0,n]$ such that the following are true.
    \begin{enumerate}
        \item $n\in L,R$
        \item $[0,n-1]\subset (L+L)$
        \item $[0,n-1]\subset (R+R)$
        \item $[0,n-1]\not\subset[R+L]$.
    \end{enumerate}
    (Note that the second and third conditions imply that $0\in L,R$). Choose a suitable $m \ge n$ such that elements of $[n,m]$ fill out the sums in $[n+1,2m+n-1]$ (cf. \cite{Zhao2010}), lacking up to one element. Then, set \begin{align}
        A_1 = L\cup[n,m]\cup(m+n-R),
    \end{align}
    and for $l\ge1$, set
    \begin{align}
        A_{2k+1} \ = \ A_{2k-1} \ \cup \ (m+(k+1)n-R).
    \end{align}
    Then $A_{2k+1}$ is MSTD, and there may exist $A_{2k}$ which is MDTS such that $A_{2k-1}\subset A_{2k}\subset A_{2k+1}$.
\end{thm}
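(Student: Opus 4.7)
The plan is to mirror, at the level of the hypotheses, the explicit calculation carried out just before the theorem for the motivating set $A_0 = \{0,1,2,5,8,9,10\}$. First, I would unfold the recursion to write
$$A_{2k+1} \ = \ L \ \cup \ [n,m] \ \cup \ \bigcup_{j=1}^{k+1}(m+jn-R),$$
noting that $0,n\in L,R$ imply $\min A_{2k+1}=0$, $\max A_{2k+1}=m+(k+1)n$, and that consecutive translates $(m+jn-R)$ and $(m+(j+1)n-R)$ abut at the shared point $m+(j+1)n$. The diameter is therefore $m+(k+1)n$.

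Next I would show that $A_{2k+1}+A_{2k+1}$ covers $[0,2m+2(k+1)n]$ with at most one missing element. The range $[0,n-1]$ is handed to us by condition (2), the range $[n,n+m]$ follows from $L+[n,m]$ because $0,n\in L$, the middle range $[n+1,2m+n-1]$ is covered by sums involving $[n,m]$ up to the one element allowed by the hypothesis on $m$, and the upper ranges come from sums of translates: $(m+jn-R)+(m+j'n-R)=2m+(j+j')n-(R+R)$, where condition (3) (rewritten as $[n+1,2n]\subset(n-R)+(n-R)$) provides full coverage. Combining these overlapping ranges produces a sumset of size $2m+2(k+1)n$ or $2m+2(k+1)n+1$.

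Then I would analyze $A_{2k+1}-A_{2k+1}$ and exhibit strictly more missing elements. By symmetry I focus on nonnegative differences. Condition (4) supplies an $x_0\in[0,n-1]$ with $x_0\notin L+R$; the differences $(m+jn-R)-L=m+jn-(L+R)$ then miss $m+jn-x_0$ for each $j\in[1,k+1]$. The crux of the proof is to confirm that these missing values are not recovered by any other difference in $A_{2k+1}-A_{2k+1}$: differences of the form $L-L$, $[n,m]-[n,m]$, and $[n,m]-L$ live in the wrong ranges; the cross-differences $(m+j'n-R)-(m+jn-R)=(j'-j)n+(R-R)$ lie on integer multiples of $n$ offset by $R-R$, so one must verify that $x_0\not\equiv -r+r' \pmod{n}$ for appropriate $r,r'\in R$; similarly for $(m+jn-R)-[n,m]$. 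This persistence-of-the-gap step is the main obstacle, and I expect it to be what the clause ``suitable $m$'' tacitly regulates: with $m$ chosen large enough relative to $n$, the only differences that can land in $[m,m+(k+1)n]$ come from $(m+jn-R)-L$ pairs, so the gap at $\pm(m+jn-x_0)$ survives. Counting the resulting missing differences yields $|A_{2k+1}-A_{2k+1}|<|A_{2k+1}+A_{2k+1}|$, so $A_{2k+1}$ is MSTD.

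Finally, for the MDTS interlude, I would, imitating the $8l+14$ trick in the warm-up, search for a single element $y\in A_{2k+1}\setminus A_{2k-1}$ lying just above $\max A_{2k-1}$ so that $y+A_{2k-1}$ mostly lands in parts of the sumset already covered (creating only a handful of new sums) while $y-A_{2k-1}$ populates many previously missing values in the difference set (in particular, $y-\{0,1,\ldots\}$ will fill the symmetric counterparts of the $x_0$-gap). Setting $A_{2k}=A_{2k-1}\cup\{y\}$ then gives $A_{2k-1}\subset A_{2k}\subset A_{2k+1}$, and a tally of new sums versus new differences flips the MSTD imbalance of $A_{2k-1}$. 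The qualifier ``there may exist'' reflects that such a $y$ need not exist for every $L,R,n,m$; when it does, the inclusions are immediate from the construction.
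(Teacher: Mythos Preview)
Your sumset analysis matches the paper's and is essentially correct: the ranges $[0,n]$, $[n+1,2m+n-1]$, $\{2m+n\}$, and $[2m+n+1,2m+2(k+1)n]$ are covered respectively by $L+L$, the hypothesis on $m$, the sum $m+(m+n)$, and the pairs of translates $(m+jn-R)+(m+j'n-R)=2m+(j+j')n-(R+R)$, yielding $|A_{2k+1}+A_{2k+1}|\ge 2m+2(k+1)n$.

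The gap is in your treatment of the difference set. You try to exhibit $k+1$ missing positive values, one at $m+jn-x_0$ for each $j\in[1,k+1]$, and then you assert that ``with $m$ chosen large enough relative to $n$, the only differences that can land in $[m,m+(k+1)n]$ come from $(m+jn-R)-L$ pairs.'' This is false for every fixed $m$ once $k$ is large: the cross-translate differences
\[
(m+j'n-r') - (m+jn-r) \ = \ (j'-j)n + (r-r')
\]
range up to $(k+1)n$, and the differences $(m+jn-R)-[n,m]$ range up to $m+kn$, so both types eventually land in the interval $[m,m+(k+1)n]$ and can fill your lower gaps. No ``suitable $m$'' independent of $k$ prevents this; the hypothesis on $m$ in the statement concerns the \emph{sumset} only.

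The paper sidesteps this entirely by looking only at the \emph{top} window $[m+kn+1,\,m+(k+1)n]$. There the arithmetic is clean: any $(m+jn-R)-[n,m]$ is at most $m+kn$, and any cross-translate difference is at most $(k+1)n\le m+kn$ since $m\ge n$. Hence every positive difference exceeding $m+kn$ must be of the form $(m+(k+1)n-r)-\ell = m+(k+1)n-(r+\ell)$ with $r\in R$, $\ell\in L$, and condition~(4) forces a gap there. One missing pair $\pm(m+(k+1)n-x_0)$ is already enough to beat the sumset, which misses at most one element. Restricting your argument to this top window fixes the proof and removes the need to verify any congruence conditions on $x_0$.
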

\begin{proof}
Since
\begin{align}
    \notag &[0,n] \subset (L+L) \\
    &2m+n \ = \ m+(m+n-0)
\end{align}
and
\begin{align}
    \notag[2m+n+1,2m+2n] \ &\subset \ (m+n-R)+(m+n-R)\\
    &=\ 2m+2n-(R+R),
\end{align}
we have $\big|A_1+A_1\big| = 2m+2n+1$ or $\big|A_1+A_1\big| = 2m+2n$.

The maximum value of $\big|A_1-A_1\big|$ is $2m+2n+1$, so in order to show $A_1$ is MSTD we must show that $A_1-A_1$ is missing at least one pair of elements. This is true because elements of $(A_1-A_1)^{\ge m+1}$ must be of the form $(m+n-r)-l$ with $r\in R,l\in L$. From our fourth condition, we know that $r+l$ cannot take all values in $[0,n-1]$, and so $m+n-r-l$ cannot take all values in $[m+1,m+n]$. This causes the desired gap in the difference set, so $A_1$ is MSTD.

We then set \begin{align}
    A_3 \ = \ A_1 \ \cup \ (m+2n-R).
\end{align} 
We have \begin{align}
  \notag [2m+2n+1,2m+3n] \ &\subset \ (m+2n-R)+(m+n-R) \\
  [2m+3n+1,2m+4n] \ &\subset \ (m+2n-R)+(m+2n-R),
\end{align} 
which implies that
\begin{equation}
    [2m+2n+1,2m+4n] \ \subset \ A_3+A_3.
\end{equation}
Furthermore, $A_3-A_3$ is now missing at least one difference in $[m+n+1,m+2n]$ due to the fourth condition, so $\big|A_3+A_3\big|\ge2m+4n$ and $\big|A_3-A_3\big| \le 2m+4n-1$. Thus $A_3$ is MSTD with $A_1 \subset A_3$.

Continuing the sequence, appending $m+(k+1)n-R$ gives new possible sums in the range $[2m+2kn+1,2m+2(k+1)n]$ to consider. The sums are all realized since
\begin{equation}
    [2m+2kn+1,2m+(2k+1)n] \subset (m+kn-R)+(m+(k+1)n-R)
\end{equation}
and
\begin{equation}
    [2m+(2k+1)n+1,2m+2(k+1)n] \subset (m+(k+1)n-R)+(m+(k+1)n-R).
\end{equation}
Then, since $A_{2k-1}-A_{2k-1}$ always lacks at least two differences (again one in $(A-A)^{\ge m+kn+1}$, and its negative), $A_{2k-1}$ is MSTD. The diameter of a MSTD set in the sequence is $n$ larger than the previous.
\end{proof}

\begin{rem}
    We may also generalize the conditions as follows. We require only that the number of elements in $[0,n-1]$ missing from $L+L$ is less than twice the number of elements in $[0,n-1]$ missing from $L+R$. The slowest-growing sequence obtained with the new conditions uses $n = 7$, $L = \{0,1,3,7\}$, $R = \{0,1,2,4,7\}$, and $m = 8$.
\end{rem}

\newpage 
\section{Growth Rates}
We conclude by giving a table that compares the growth characteristics of the various methods we covered. Note that the $\big|A_1\big|$ and $A_1$ Diam. columns give the smallest possible cardinality and diameter for a set $A_1$ which a method can be applied to. Additionally, the Card. Rate and Diam. Rate columns measure the growth rate between consecutive MSTD sets in the sequence generated by the minimal $A_1$ for that method.

\begin{table}[h]
\centering
\textbf{\caption{Minimal Method Growth Characteristics}} \vspace{0.1cm}
\label{tab:method1} 
\begin{tabular}{|c|c|c|c|c|c|}
\hline
\textbf{Method} & $\mathbf{\big|A_1\big|}$ & $\mathbf{A_1}$ \textbf{Diam.} & \textbf{Card. Rate} & \textbf{Diam. Rate} & \textbf{Type} \\
\hline
Filling in 1 & 8 & 14 & $>3\cdot\big|A_{2i-1}\big|$ & $>3\cdot$ Diam$(A_{2i-1})$ & Exponential\\
\hline 
Filling in 2 & 11 & 19 & 20 & 20 & Linear\\ 
\hline 
Non-Filling in & 11 & 18 & 4 & 8 & Linear\\
\hline
\end{tabular}
\end{table}

\vspace{0.15cm}
\printbibliography

\vspace{0.8cm}

\end{document}